\newtheorem{thm}{Theorem}[section]
\newtheorem{cor}[thm]{Corollary}
\newtheorem{lem}[thm]{Lemma}
\theoremstyle{definition}
\newtheorem{exmpl}[thm]{Example}
\newtheorem{definition}[thm]{Definition}
\newtheorem{remark}[thm]{Remark}
\newtheorem{question}[thm]{Question}
\renewcommand{\epsilon}{\varepsilon}
\renewcommand{\phi}{\varphi}
\newcommand{\defeq}{\mathrel{\mathop:}=}
\DeclareMathOperator{\Aut}{Aut}
\def\moverlay{\mathpalette\mov@rlay}
\def\mov@rlay#1#2{\leavevmode\vtop{%
		\baselineskip\z@skip \lineskiplimit-\maxdimen
		\ialign{\hfil$\m@th#1##$\hfil\cr#2\crcr}}}
\newcommand{\charfusion}[3][\mathord]{
	#1{\ifx#1\mathop\vphantom{#2}\fi
		\mathpalette\mov@rlay{#2\cr#3}
	}
	\ifx#1\mathop\expandafter\displaylimits\fi}
\DeclareMathOperator{\diam}{diam}
\begin{document}
%%%%%%%%%%%%%%%%%%%%%%%%%%%%%%%%

%\onehalfspace

\setlist{noitemsep}

\author{Vladimir G. Pestov}
\address{V.P., Department of Mathematics and Statistics, University of Ottawa, 585 King Edward Ave., Ottawa, Ontario, Canada K1N 6N5 }
\curraddr{Departamento de Matem\'atica, Universidade Federal de Santa Catarina, Trindade, Florian\'opolis, SC, 88.040-900, Brazil}
\email{vpest283@uottawa.ca}

\author{Friedrich Martin Schneider}
\address{F.M.S., Institut f\"ur Algebra, TU Dresden, 01062 Dresden, Germany }
%\curraddr{Department of Mathematics, University of Auckland, Private Bag 92019, NZ}
\email{martin.schneider@tu-dresden.de}

\title{On amenability and groups of measurable maps}
\date{August 29, 2017}

\begin{abstract} 
	We show that if $G$ is an amenable topological group, then the topological group $L^{0}(G)$ of strongly measurable maps from $([0,1],\lambda)$ into $G$ endowed with the topology of convergence in measure is whirly amenable, hence extremely amenable. Conversely, we prove that a topological group $G$ is amenable if $L^{0}(G)$ is.
\end{abstract}

\maketitle

%%%%%%%%%%%%%%%%%%%%%%%%%%%%%%%%%%%%%%%%%%
%%%%%%%%%%%%%%%%%%%%%%%%%%%%%%%%%%%%%%%%%%

\section{Introduction}

In this note we study measurable maps taking values in topological groups. Let us recall that a map $f \colon X \to Y$ from a compact Hausdorff space $X$ carrying a regular Borel probability measure $\mu$ into a topological space $Y$ is \emph{strongly $\mu$-measurable}\footnote{Alternative terms used in the literature are \emph{$\mu$-measurable in the sense of Bourbaki}~\cite[p.~357]{gaal}, \emph{Lusin $\mu$-measurable}~\cite{schwartz}, or \emph{$\mu$-almost continuous}~\cite{fremlin}.} if, for every $\epsilon > 0$, there exists a closed subset $A \subseteq X$ with $\mu (X\setminus A) \leq \epsilon$ such that $f\vert_{A} \colon A \to Y$ is continuous. Evidently, if $f$ is strongly $\mu$-measurable, then it is $\mu$-measurable, in the sense that $f^{-1}(B)$ is $\mu$-measurable for every Borel $B \subseteq G$. In case $Y$ is metrizable~\cite[Theorem~2B]{fremlin}, the map $f$ is strongly $\mu$-measurable if and only if $f$ is $\mu$-measurable.

Let $G$ be a topological group. Considering the Lebesgue measure $\lambda$ on the real interval $[0,1]$, we define $L^{0}(G)$ to be the set of all $\lambda$-equivalence classes of strongly $\lambda$-measurable maps from $[0,1]$ into $G$. Equipping $L^{0}(G)$ with the group structure inherited from $G$ and the topology of convergence in measure, we obtain a topological group. We refer to Section~\ref{section:final} for more details on~$L^{0}(G)$.

The aim of this note is to prove the following, thus answering a question raised in~\cite{giordanopestov}.

\begin{thm}\label{theorem:main} Let $G$ be a topological group. The following are equivalent. \begin{enumerate}
	\item[$(1)$] $G$ is amenable.
	\item[$(2)$] $L^{0}(G)$ is amenable.
	\item[$(3)$] $L^{0}(G)$ is extremely amenable.
	\item[$(4)$] $L^{0}(G)$ is whirly amenable.
\end{enumerate} \end{thm}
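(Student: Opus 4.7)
The implications $(4) \Rightarrow (3) \Rightarrow (2)$ are immediate from the definitions: whirly amenability forces every invariant probability measure on a continuous compact $G$-space to be concentrated on fixed points, so combined with amenability it yields extreme amenability, which is in turn obviously stronger than amenability.

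For $(2) \Rightarrow (1)$, I would exploit the canonical closed topological embedding $\iota \colon G \hookrightarrow L^{0}(G)$ sending $g$ to the constant map with value $g$. For any $f \in \mathrm{RUCB}(G)$ set
\[
\tilde{f} \colon L^{0}(G) \longrightarrow \mathbb{R}, \qquad \tilde{f}(\phi) \defeq \int_{0}^{1} f(\phi(t)) \, d\lambda(t).
\]
A bounded-convergence argument applied to subsequences converging almost everywhere shows that $\tilde{f}$ is continuous on $L^{0}(G)$, and splitting $[0,1]$ into the set where $\phi_{1}\phi_{2}^{-1}$ takes values in a suitably small neighbourhood of $e$ in $G$ and its (small) complement shows that $\tilde{f}$ is right-uniformly continuous, so $\tilde{f} \in \mathrm{RUCB}(L^{0}(G))$. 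The assignment $f \mapsto \tilde{f}$ is $G$-equivariant, because
\[
\widetilde{g \cdot f}(\phi) \,=\, \int_{0}^{1} f(g^{-1}\phi(t))\, d\lambda(t) \,=\, \tilde{f}(\iota(g)^{-1}\phi) \,=\, (g \cdot \tilde{f})(\phi).
\]
Composing $f \mapsto \tilde{f}$ with any $L^{0}(G)$-invariant mean on $\mathrm{RUCB}(L^{0}(G))$ produces a $G$-invariant mean on $\mathrm{RUCB}(G)$, so $G$ is amenable.

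The substantive direction is $(1) \Rightarrow (4)$. My strategy is to establish concentration of measure on $L^{0}(G)$, i.e., to realise $L^{0}(G)$ as a L\'evy-type group in a sense strong enough to imply whirly amenability. Partitioning $[0,1]$ into $n$ equal intervals embeds $G^{n}$ into $L^{0}(G)$ as the subgroup of step functions on the partition, and the union of these subgroups is dense. The symmetric group $S_{n}$ normalises $G^{n}$ by permuting the intervals, producing wreath products $G^{n} \rtimes S_{n}$. By Maurey's inequality, $(S_{n}, \mathrm{Hamming})$ with uniform measure is a L\'evy family; I would transport this concentration along the wreath construction, using almost-invariant finitely supported probability measures on $G$ (available for every amenable topological group, with no local-compactness needed, via the Day-type reformulation of amenability) to produce an approximating concentrating net inside $L^{0}(G)$. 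The Gromov--Milman theorem then gives extreme amenability of $L^{0}(G)$, and the very same concentration estimates force any invariant probability measure of any continuous action on a compact space to be supported on fixed points, which is precisely whirly amenability.

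The main obstacle is executing the concentration transport without a Haar measure on $G$. In the locally compact case one averages Reiter functions in the coordinates of $G^{n}$, which is how existing results of Glasner--Tsirelson--Weiss and of the first author handle the locally compact amenable setting; for a general amenable topological group such a measure-theoretic substrate is unavailable, and one must work purely with nets of almost-invariant, finitely supported probability measures on $G$ together with a wreath-level Maurey-type inequality whose hypotheses are combinatorial rather than analytic. Assembling this substitute for Reiter functions and fitting it together with the Hamming concentration on $S_{n}$ to read off both extreme and whirly amenability of $L^{0}(G)$ is, I expect, the principal technical content of the proof.
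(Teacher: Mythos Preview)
Your treatment of $(4)\Rightarrow(3)\Rightarrow(2)$ and of $(2)\Rightarrow(1)$ matches the paper essentially line for line: the paper also defines $\Phi(f)(h)=\int_{0}^{1} f(h(t))\,d\lambda(t)$, checks it lands in $\mathrm{RUC}_b(L^{0}(G))$ by the same two-piece estimate you describe, and composes with an invariant mean.

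For $(1)\Rightarrow(4)$ your high-level plan is right but the concentration mechanism you propose is not the one the paper uses, and the wreath/$S_n$ detour does not quite fit. The symmetric group $S_n$ is not a subgroup of $L^{0}(G)$ (it acts on $[0,1]$, not by values in $G$), so Maurey's inequality for $S_n$ with the Hamming metric has no direct carrier inside $L^{0}(G)$. The paper instead works entirely with the \emph{product} side of the picture: from the Schneider--Thom/Day characterisation it extracts finitely supported $\mu_i\in\mathrm{Pr}_{\mathrm{fin}}(G)$ whose almost-invariance defect decays faster than $1/n_i$, forms the product measures $\mu_i^{\otimes n_i}$ on the finite sets $S_i^{n_i}$ with normalised Hamming distance, and invokes Talagrand's product-space inequality $\alpha(\epsilon)\leq 2\exp(-\epsilon^{2}n)$ to get a L\'evy family. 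The maps $h_{n_i}\colon(S_i^{n_i},d_{n_i})\to L^{0}(G)$ are uniformly equicontinuous because Hamming distance dominates measure-of-disagreement, so the push-forwards $\nu_i=(h_{n_i})_{\ast}\mu_i^{\otimes n_i}$ concentrate in $L^{0}(G)$.

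The piece you have not identified, and which carries the real weight, is the separate verification that these same $\nu_i$ converge to invariance in the UEB topology. This is the paper's Lemma~4.4: one telescopes translation by an approximating step function $h_n(g')$ into $n$ single-coordinate shifts, and uses Fubini together with the fact that each one-variable slice $f\circ h_n\circ c_{j,a}$ lies in a fixed $\mathrm{RUEB}(G)$ set, so the $\epsilon/n_i$ rate of almost-invariance of $\mu_i$ sums to~$\epsilon$. Concentration plus UEB-convergence to invariance then feed into the paper's Theorem~3.7 to yield whirly amenability. So: drop the $S_n$/wreath layer, take Talagrand on product spaces as your L\'evy source, and isolate the telescoping computation for invariance as the core lemma.
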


In particular, Theorem~\ref{theorem:main} provides a source of extremely amenable groups. The historically initial example, of a group with no nontrivial unitary representations, hence extremely amenable (since it is also amenable), was obtained by Herer and Christensen~\cite{HererChristensen}, as a group of measurable functions with values in the circle, however with a pathological submeasure in place of the Lebesgue measure.

For $G$ the circle group, the extreme amenability of $L^{0}(G)$ (relative to the Lebesgue measure) was proved by Glasner, although only published many years later~\cite{glasner98}, and, as mentioned in the article, at the same time and independently it was proved by Furstenberg and Benjy Weiss, unpublished. (Of course, this group has plenty of unitary representations, recently classified by Solecki~\cite{solecki}.) The result was later generalized to amenable locally compact groups $G$ \cite{pestov02,pestov10}.

The proof of Theorem~\ref{theorem:main} (see Section~\ref{section:final}) combines a recent result from~\cite{SchneiderThom} with a measure concentration argument. The application of measure concentration phenomena for proving extreme amenability was initiated by Gromov and Milman~\cite{GromovMilman}. We note that for non-atomic submeasures the concentration technique cannot be used, because there is in general no concentration, so special combinatorial techniques must be applied instead, see Farah-Solecki~\cite{FarahSolecki} and Sabok~\cite{sabok}, and the open questions therein. It would be interesting to see if the amenability criterion from~\cite{SchneiderThom} can be used to extend these combinatorial techniques to all amenable topological groups.

\section{Preliminaries}

In this section we fix some notation and briefly review some basic facts concerning uniform spaces, topological groups, and amenability.

\subsection{Notation} Let us make some initial remarks about notation. Let $X$ be a set. As usual, we will denote by $\ell^{\infty}(X)$ the Banach space of all bounded real-valued functions on $X$ equipped with the supremum norm \begin{displaymath}
	\Vert f \Vert_{\infty} \defeq \sup \{ \vert f(x) \vert \mid x \in X \} \qquad (f \in \ell^{\infty}(X)) .
\end{displaymath} We denote by $\mathrm{Pr}(X)$ the set of all probability measures on $X$ and by $\mathrm{Pr}_{\mathrm{fin}}(X)$ the set of all finitely supported probability measures on $X$. If $X$ is a topological space, then we will denote by $\mathrm{Pr}_{\mathrm{B}}(X)$ the set of all Borel probability measures on $X$.

\subsection{UEB topology} Let $X$ be a uniform space. We consider the Banach space $\mathrm{UC}_{b}(X)$ of all bounded uniformly continuous real-valued functions on $X$ endowed with the supremum norm. A subset $H \subseteq \mathrm{UC}_{b}(X)$ is called \emph{UEB} (short for \emph{uniformly equicontinuous bounded}) if $H$ is uniformly equicontinuous and $\Vert \cdot \Vert_{\infty}$-bounded. The set $\mathrm{UEB}(X)$ of all UEB subsets of $\mathrm{UC}_{b}(X)$ constitutes a convex vector bornology on the vector space $\mathrm{UC}_{b}(X)$. The \emph{UEB topology} on the continuous dual~$\mathrm{UC}_{b}(X)'$ is defined as the topology of uniform convergence on UEB subsets of $\mathrm{UC}_{b}(X)$. This is a locally convex linear topology on the vector space $\mathrm{UC}_{b}(X)'$ containing the weak*-topology, i.e., the initial topology generated by the maps $\mathrm{UC}_{b}(X)' \to \mathbb{R}, \, \mu  \mapsto \mu (f)$ where $f \in \mathrm{UC}_{b}(X)$. A description of the UEB topology in terms of semi-norms emerging from compatible pseudo-metrics on $X$ can be found in~\cite[Section~2]{SchneiderThom}. We note that the real vector space $\mathbb{R}X$ over the base set $X$ equipped with the UEB topology inherited from~$\mathrm{UC}_{b}(X)'$ via the natural embedding is related to the free locally convex real topological vector space over the uniform space $X$~\cite{raikov}, although the two spaces are different already for $X={\mathbb R}$; rather, the space $\mathbb{R}X$ has a similar universal property with regard to continuous linear maps with bounded image. For more details on the UEB topology we refer to~\cite{PachlBook,NeufangPachlPekka,PachlSteprans}.

\subsection{Topological groups} Let $G$ be a topological group. Let $\mathscr{U}(G)$ denote the neighborhood filter of the neutral element in $G$ and let $\mathscr{U}_{o}(G)$ be the set of all open sets in $\mathscr{U}(G)$. Let us endow $G$ with its \emph{right uniformity} defined by the basic entourages of the form \begin{displaymath}
	\left\{ (x,y) \in G \times G \left\vert \, xy^{-1} \in U \right\} \qquad (U \in \mathscr{U}(G)) . \right.
\end{displaymath} Referring to this uniform structure, we denote by $\mathrm{RUC}_{b}(G)$ the space of all bounded uniformly continuous real-valued function on $G$ and by $\mathrm{RUEB}(G)$ the set of all UEB subsets of $\mathrm{RUC}_{b}(G)$. For $g \in G$, let $\lambda_{g} \colon G \to G, \, x \mapsto gx$. Recall that $G$ is said to be \emph{amenable} if $\mathrm{RUC}_{b}(G)$ admits a left-invariant mean, i.e., a positive linear map $\mu \colon \mathrm{RUC}_{b}(G) \to \mathbb{R}$ such that $\mu (\mathbf{1}) = 1$ and $\mu (f \circ \lambda_{g}) = \mu (f)$ for all $f \in \mathrm{RUC}_{b}(G)$ and~$g \in G$. By a well-known result of Rickert~\cite[Theorem~4.2]{rickert}, $G$ is amenable if and only if every continuous action of $G$ by affine homeomorphisms on a non-void compact convex subset of a locally convex topological vector space admits a fixed point. We will need the following characterization of amenability in terms of almost invariant finitely supported probability measures from~\cite{SchneiderThom}.

\begin{thm}[\cite{SchneiderThom}, Theorem~3.2]\label{theorem:topological.day} A topological group $G$ is amenable if and only if, for every $\epsilon > 0$, every $H \in \mathrm{RUEB}(G)$ and every finite subset $E \subseteq G$, there exists a finitely supported probability measure $\mu$ on $G$ such that \begin{displaymath}
	\forall g \in E \colon \quad \sup\nolimits_{f \in H} \vert \mathbb{E}_{\mu}(f) - \mathbb{E}_{\mu}(f \circ \lambda_{g}) \vert \leq \epsilon .
	\end{displaymath} \end{thm}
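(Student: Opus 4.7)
The plan is a Day-type convexity argument. For the ``if'' direction, assume the approximation property. For each triple $\alpha = (\epsilon, H, E)$, let $\mu_\alpha \in \mathrm{Pr}_\mathrm{fin}(G)$ be a measure furnished by the hypothesis, and direct these triples by $\epsilon \downarrow 0$ and inclusion in $H$ and $E$. The net $(\mu_\alpha)$ lies in the unit ball of $\mathrm{RUC}_b(G)'$, so by Banach--Alaoglu admits a weak$^\ast$-cluster point $m$, which is manifestly a mean. For every $f \in \mathrm{RUC}_b(G)$ and $g \in G$, testing the hypothesis against the UEB singleton $\{f\}$ and singleton $\{g\}$ and passing to the cluster point gives $m(f \circ \lambda_g) = m(f)$, so $m$ is left-invariant and $G$ is amenable.

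For the harder direction, assume $G$ is amenable with left-invariant mean $m$, and fix $\epsilon > 0$, $H \in \mathrm{RUEB}(G)$, and a finite $E \subseteq G$. Consider the affine map
\[
T \colon \mathrm{Pr}_\mathrm{fin}(G) \longrightarrow \ell^\infty(H \times E), \qquad T(\mu)(f, g) \defeq \mathbb{E}_\mu(f) - \mathbb{E}_\mu(f \circ \lambda_g),
\]
whose image is convex and bounded by $2\sup_{f \in H}\Vert f \Vert_\infty$ in the Banach space $\ell^\infty(H \times E)$. If no $\mu$ of the desired form existed, then the open $\epsilon$-ball about $0$ in $\ell^\infty(H \times E)$ would be disjoint from $T(\mathrm{Pr}_\mathrm{fin}(G))$, and geometric Hahn--Banach would produce $\phi \in \ell^\infty(H \times E)'$ and $\beta > 0$ such that $\phi(T(\mu)) \geq \beta$ for every $\mu \in \mathrm{Pr}_\mathrm{fin}(G)$.

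The decisive step is then to pull $\phi$ back to functions on $G$ itself. For each $g \in E$, define $F_g \colon G \to \mathbb{R}$ by
\[
F_g(x) \defeq \phi\bigl((f, g') \mapsto f(x)\mathbf{1}_{g = g'}\bigr).
\]
Boundedness of $F_g$ is immediate, and right uniform continuity is precisely where the UEB hypothesis on $H$ is used: uniform equicontinuity of $H$ makes the evaluation $x \mapsto (f \mapsto f(x))$ a right uniformly continuous map from $G$ into $\ell^\infty(H)$, and $F_g$ is its composition with the $g$-fibre of $\phi$. Since every $\mu \in \mathrm{Pr}_\mathrm{fin}(G)$ is a finite convex combination of point masses, by linearity alone (no integration theorem for finitely additive measures is needed) one obtains
\[
\phi(T(\mu)) = \sum_{g \in E} \mu(F_g - F_g \circ \lambda_g) = \mu(\Psi), \qquad \Psi \defeq \sum_{g \in E}(F_g - F_g \circ \lambda_g) \in \mathrm{RUC}_b(G).
\]
Specialising to $\mu = \delta_x$ yields $\Psi(x) \geq \beta$ for every $x \in G$, hence $m(\Psi) \geq \beta$; but left-invariance of $m$ forces $m(\Psi) = 0$, the desired contradiction.

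I expect the main obstacle to be precisely this pull-back step: converting the abstract separating functional $\phi$, a priori only a bounded finitely additive measure on $H \times E$, into honest right uniformly continuous functions on $G$ to which invariance of $m$ can be applied. The sup-norm bound on $H$ alone yields boundedness of each $F_g$, but right uniform continuity genuinely requires $H$ to be uniformly equicontinuous rather than merely bounded---this is the unique point in the argument where the UEB structure is essential.
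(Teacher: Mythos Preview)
The paper does not supply its own proof of this theorem; it is quoted verbatim from \cite{SchneiderThom} (Theorem~3.2 there) and used as a black box. So there is nothing in the present paper to compare your argument against.

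That said, your proof is correct and is exactly the Day-type argument one expects (and which appears in the cited reference). A few small remarks. In the easy direction, your net is indeed directed because the union of two RUEB sets is again RUEB; and the passage from weak$^\ast$ cluster point to invariance is fine since for fixed $f,g$ the discrepancy $\mu_\alpha(f)-\mu_\alpha(f\circ\lambda_g)$ is eventually $\leq\epsilon$ along the whole tail, while the cluster-point condition lets you sample that tail arbitrarily close to $m$. In the separation direction, your claim that $\beta>0$ is justified because the open $\epsilon$-ball is symmetric about $0$, so the separating functional $\phi$ satisfies $\sup_{B_\epsilon}\phi>0$, whence $\inf\phi(T(\mathrm{Pr}_{\mathrm{fin}}(G)))\geq\sup_{B_\epsilon}\phi>0$. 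The identification $\phi(T(\delta_x))=\Psi(x)$ goes through cleanly because $E$ is finite, so $\ell^\infty(H\times E)$ decomposes as a finite direct sum over $g\in E$ and $\phi$ splits accordingly; no subtlety with finitely additive measures on $H$ arises, since you only ever evaluate $\phi$ on concrete elements of $\ell^\infty(H\times E)$. Your diagnosis of where the UEB hypothesis enters---to make each $F_g$ right uniformly continuous via continuity of $x\mapsto(f\mapsto f(x))$ into $\ell^\infty(H)$---is exactly right.
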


Let us recall that a topological group $G$ is \emph{extremely amenable} if every compact $G$-space has a fixed point. The following definition of a stronger concept is taken from~\cite{pestov10}, but has its origins in~\cite{GlasnerTsirelsonWeiss,GlasnerWeiss}.

\begin{definition} A topological group $G$ is called \emph{whirly amenable} if \begin{enumerate}
	\item[$\bullet$] $G$ is amenable, and
	\item[$\bullet$] every invariant regular Borel probability measure on a compact $G$-space is supported on the set of fixed points.
\end{enumerate} \end{definition}

Clearly, any whirly amenable topological group is extremely amenable. The converse is not true: $\Aut (\mathbb{Q},{<})$ endowed with the topology of point-wise convergence is extremely amenable~\cite{pestov98}, but not whirly amenable~\cite[Remark~1.3]{GlasnerTsirelsonWeiss}.

\section{L\'evy families and concentration of measure}

In this section we recall some results concerning the concentration of measure~\cite{levy,milman,MilmanSchechtman}. The measure concentration phenomenon was linked to extreme amenability by Gromov and Milman~\cite{GromovMilman}. We will largely follow the presentation of~\cite{pestov02,pestovbook,giordanopestov,pestov10} and start off with some generalities on L\'evy families of metric spaces with measures. For more details on measure concentration we refer to~\cite{ledoux}.

\begin{definition}\label{definition:concentration.2} Let $\mathscr{X} = (X,d,\mu)$ be an \emph{$mm$-space}, i.e., $(X,d)$ is a metric space and $\mu$ is a Borel probability measure on $(X,d)$. The \emph{concentration function} of $\mathscr{X}$, $\alpha_{\mathscr{X}} \colon [0,\infty) \to [0,1/2]$, is defined by \begin{displaymath}
	\alpha_{\mathscr{X}}(\epsilon) \defeq \begin{cases}
	1/2 & \text{if } \epsilon = 0 , \\
	1 - \inf \{ \mu (B_{d}(A,\epsilon)) \mid A \subseteq X \text{ Borel, } \, \mu (A) \geq 1/2 \} & \text{otherwise.}
	\end{cases} 
\end{displaymath} A net $(X_{i},d_{i},\mu_{i})_{i \in I}$ of $mm$-spaces is said to \emph{concentrate} or to be a \emph{L\'evy family} if, for every family of Borel subsets $A_{i} \subseteq X_{i}$ ($i \in I$) with $\liminf_{i \in I} \mu_{i}(A_{i}) > 0$, one has $\lim_{i \in I} \mu_{i}(B_{d_{i}}(A_{i},\epsilon)) = 1$ for any $\epsilon > 0$. \end{definition}

\begin{remark}[\cite{GromovMilman}]\label{remark:concentration1} A net $\mathscr{X}_{i} = (X_{i},d_{i},\mu_{i})$ ($i \in I$) of $mm$-spaces is a L\'evy family if and only if $\lim_{i \in I} \alpha_{\mathscr{X}_{i}}(\epsilon ) = 0$ for every $\epsilon > 0$. \end{remark}

We will need the following important example of L\'evy families in the final section.

\begin{exmpl}\label{example:concentration} Let $(X,\mu)$ be a discrete probability space. For an integer $n \geq 1$, we obtain an $mm$-space $\mathscr{Y}_{X,\mu,n} \defeq (X^{n},\mu^{\otimes n},d_{n})$ by equipping $X^{n}$ with the product measure $\mu^{\otimes n}$ and the \emph{normalized Hamming distance} given by \begin{displaymath}
	d_{n}(x,y) \defeq \frac{\vert \{ i \in \{ 1,\ldots,n\} \mid x_{i} \ne y_{i} \} \vert}{n} \qquad (x,y \in X^{n}) .
\end{displaymath} Then \begin{displaymath}
	\alpha_{\mathscr{Y}_{X,\mu,n}}(\epsilon ) \leq 2\exp (-\epsilon^{2}n) 
\end{displaymath} for all $\epsilon \geq 0$ (see~\cite[Proposition~2.1.1]{talagrand} and~\cite{schechtman82,MilmanSchechtman} for finite $X$). In view of Remark~\ref{remark:concentration1} it follows that, if $(X_{i},\mu_{i})_{i \in I}$ is a net of discrete probability spaces and $(n_{i})_{i \in I}$ a corresponding net of positive integers with $n_{i} \to \infty$, then $(\mathscr{Y}_{X_{i},\mu_{i},n_{i}})_{i \in I}$ is a L\'evy family. \end{exmpl}

Now we turn our attention towards measure concentration in uniform spaces and consequences for UEB sets of functions.

\begin{definition}\label{definition:concentration.1} Let $X$ be a uniform space. For an entourage $U$ in $X$, let \begin{displaymath}
	U[A] \defeq \{ y \in X \mid \exists x \in A \colon \, (x,y) \in U \} \qquad (A \subseteq X) .
\end{displaymath} A net $(\mu_{i})_{i \in I}$ of Borel probability measures on $X$ is said to \emph{concentrate in $X$} if, whenever $(A_{i})_{i \in I}$ is a family of Borel subsets of $X$ with $\liminf_{i \in I} \mu_{i}(A_{i}) > 0$, one has \begin{displaymath}
	\lim\nolimits_{i \in I} \mu_{i}(U[A_{i}]) = 1
\end{displaymath} for every open entourage $U$ in $X$. \end{definition}

\begin{remark}[see~\cite{GromovMilman}, 2.1; \cite{pestov02}, Lemma~2.7]\label{remark:concentration2} Let $(X_{i},d_{i},\mu_{i})_{i \in I}$ be a L\'evy family of $mm$-spaces, let $Y$ be a uniform space, and let $f_{i} \colon X_{i} \to Y$ for each $i \in I$. If the family $(f_{i})_{i \in I}$ is uniformly equicontinuous, i.e., for every entourage $U$ of $Y$ there exists $\epsilon > 0$ such that \begin{displaymath}
	\forall i \in I \, \forall x,y \in X_{i} \colon \quad d_{i}(x,y) \leq \epsilon \, \Longrightarrow \, (f_{i}(x),f_{i}(y)) \in U ,
\end{displaymath} then the net $((f_{i})_{\ast}(\mu_{i}))_{i \in I}$ of push-forward measures concentrates in $X$. \end{remark}

Let us recall that every measurable real-valued function $f \colon X \to \mathbb{R}$ on a probability measure space $(X,\mathscr{B},\mu)$ admits a (not necessarily unique) \emph{median}, i.e., a real number $m \in \mathbb{R}$ with \begin{displaymath}
	\mu (\{ x \in X \mid f(x) \geq m \}) \geq \tfrac{1}{2} \leq \mu (\{ x \in X \mid f(x) \leq m \}) .
\end{displaymath} We will need the following well-known fact. 

\begin{lem}[\cite{GromovMilman}, 2.5]\label{lemma:concentration} Let $X$ be a uniform space. Let $(\mu_{i})_{i \in I}$ be net of Borel probability measures on $X$, concentrating in $X$. Let $H \in \mathrm{UEB}(X)$. For each pair $(i,f) \in I \times H$, let $m_{i}(f)$ be a median of $f$ with respect to $\mu_{i}$. Then, for every $\epsilon > 0$, \begin{displaymath}
	\sup\nolimits_{f \in H} \mu_{i}(\{ x \in X \mid \vert f(x) - m_{i}(f) \vert > \epsilon \}) \to 0 .
\end{displaymath} In particular, $\sup\nolimits_{f \in H} \vert \mathbb{E}_{\mu_{i}}(f) - m_{i}(f) \vert \to 0$. \end{lem}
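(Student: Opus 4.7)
The plan is a standard Gromov--Milman argument, combining the uniform equicontinuity of $H$ with a uniform version of the concentration hypothesis; the proof breaks into three steps.

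First, I would promote Definition~\ref{definition:concentration.1} to the following uniform statement: for every open entourage $V$ of $X$,
\[
\alpha_{i}(V) \defeq \sup\{ 1 - \mu_{i}(V[A]) \mid A \subseteq X \text{ Borel},\ \mu_{i}(A) \geq \tfrac{1}{2}\} \longrightarrow 0.
\]
This is a soft net-theoretic argument: if $\alpha_{i}(V) \not\to 0$, one finds a cofinal $J \subseteq I$, some $\delta > 0$, and Borel sets $A_{i} \subseteq X$ ($i \in J$) with $\mu_{i}(A_{i}) \geq \tfrac{1}{2}$ but $\mu_{i}(V[A_{i}]) \leq 1 - \delta$. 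Setting $A_{i} \defeq X$ for $i \in I \setminus J$ yields a family with $\liminf_{i} \mu_{i}(A_{i}) \geq \tfrac{1}{2}$ along which $\mu_{i}(V[A_{i}])$ fails to tend to $1$, contradicting concentration.

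Second, given $\epsilon > 0$, use the UEB property to choose an open entourage $V$ of $X$ such that $(x,y) \in V$ implies $|f(x) - f(y)| < \epsilon$ for every $f \in H$. For each $i \in I$ and $f \in H$, the Borel sets
\[
A_{f}^{-} \defeq \{ x \in X \mid f(x) \leq m_{i}(f)\}, \qquad A_{f}^{+} \defeq \{ x \in X \mid f(x) \geq m_{i}(f)\}
\]
have $\mu_{i}$-measure at least $\tfrac{1}{2}$ by the median property, and every $x \in V[A_{f}^{-}] \cap V[A_{f}^{+}]$ satisfies $|f(x) - m_{i}(f)| < \epsilon$. Therefore
\[
\mu_{i}(\{ x \in X \mid |f(x) - m_{i}(f)| > \epsilon \}) \leq \bigl(1 - \mu_{i}(V[A_{f}^{-}])\bigr) + \bigl(1 - \mu_{i}(V[A_{f}^{+}])\bigr) \leq 2\alpha_{i}(V),
\]
and since $V$ is independent of $f$, step one gives $\sup_{f \in H} \mu_{i}(\{|f - m_{i}(f)| > \epsilon\}) \to 0$.

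Third, for the \emph{in particular} assertion, set $M \defeq \sup_{f \in H} \Vert f\Vert_{\infty}$, finite by the boundedness clause of UEB. For any $\epsilon > 0$ and every $f \in H$,
\[
|\mathbb{E}_{\mu_{i}}(f) - m_{i}(f)| \leq \int_{X} |f - m_{i}(f)|\, d\mu_{i} \leq \epsilon + 2M\, \mu_{i}(\{|f - m_{i}(f)| > \epsilon\}).
\]
Taking the supremum over $f \in H$ and invoking step two yields $\limsup_{i} \sup_{f \in H} |\mathbb{E}_{\mu_{i}}(f) - m_{i}(f)| \leq \epsilon$, and letting $\epsilon \to 0$ finishes the proof. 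The only non-routine point is the uniformisation $\alpha_{i}(V) \to 0$ in step one, which is really a net-convergence exercise; everything downstream is a textbook median-plus-modulus-of-continuity calculation.
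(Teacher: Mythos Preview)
Your proof is correct and follows essentially the same Gromov--Milman route as the paper: choose an entourage adapted to the modulus of equicontinuity of $H$, blow up the two median half-spaces, and intersect. The only difference is that you explicitly isolate and prove the uniform concentration statement $\alpha_i(V)\to 0$ in step one, whereas the paper simply asserts the existence of an $i_0$ working for all $f\in H$ simultaneously; your cofinal-subnet argument is exactly what is needed to justify that passage, so if anything you are being more careful than the paper.
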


\begin{proof} Let $\epsilon > 0$. Since $H \in \mathrm{UEB}(X)$, there exists a symmetric open entourage $U$ in $X$ such that $\vert f(x) - f(y) \vert \leq \epsilon$ whenever $f \in H$ and $(x,y) \in U$. For every $f \in H$, we conclude that $U[A_{i}(f)] \subseteq B_{i}(f)$ and $U[A'_{i}(f)] \subseteq B'_{i}(f)$ where \begin{align*}
	& A_{i}(f) \defeq \{ x \in X \mid f(x) \leq m_{i}(f) \} , & B_{i}(f) \defeq \{ x \in X \mid f(x) \leq m_{i}(f) + \epsilon \} , \\
	& A'_{i}(f) \defeq \{ x \in X \mid f(x) \geq m_{i}(f) \} , & B'_{i}(f) \defeq \{ x \in X \mid f(x) \geq m_{i}(f) - \epsilon \} .
\end{align*} Hence, $U[A_{i}(f)] \cap U[A'_{i}(f)] \subseteq B_{i}(f) \cap B'_{i}(f)$ for all $f \in H$. We need to show that \begin{displaymath}
	\sup\nolimits_{f \in H} \mu_{i}(X\setminus (B_{i}(f) \cap B'_{i}(f))) \to 0 .
\end{displaymath} Let $\delta > 0$. For each pair $(i,f) \in I \times H$, our hypothesis on $m_{i}(f)$ asserts that \begin{displaymath}
	\min \{ \mu_{i}(A_{i}(f)),\, \mu_{i}(A'_{i}(f)) \} \geq \tfrac{1}{2} .
\end{displaymath} Since $(\mu_{i})_{i \in I}$ concentrates in $X$, we thus find some $i_{0} \in I$ such that \begin{displaymath}
	\forall i \in I , \, i \geq i_{0} \, \forall f \in H \colon \quad \min \{ \mu_{i}(U[A_{i}(f)]), \, \mu_{i}(U[A_{i}'(f)]) \} \geq 1 - \tfrac{\delta}{2} .
\end{displaymath} Consequently, if $i \in I$ with $i \geq i_{0}$, then \begin{displaymath}
	\mu_{i} (B_{i}(f) \cap B'_{i}(f)) \geq \mu_{i}(U[A_{i}(f)] \cap U[A_{i}'(f)]) \geq 1 - \delta
\end{displaymath} for every $f \in H$, which means that $\sup\nolimits_{f \in H} \mu_{i}(X\setminus (B_{i}(f) \cap B'_{i}(f))) \leq \delta$. This proves the first assertion of the lemma.

To deduce the second statement, consider any $\epsilon > 0$. For $(i,f) \in I \times H$, let $D_{i}(f) \defeq \left\{ x \in X \left| \, \vert f(x) - m_{i}(f) \vert > \tfrac{\epsilon}{2} \right\} \right.$. By the above, there is $i_{0} \in I$ such that \begin{displaymath}
	\forall i \in I , \, i \geq i_{0} \colon \quad \mu_{i}(D_{i}(f)) \leq \frac{\epsilon}{4\left(\sup\nolimits_{f \in H} \Vert f \Vert_{\infty} + 1\right)} .
\end{displaymath} Now, if $i \in I$ with $i \geq i_{0}$, then \begin{align*}
	\vert \mathbb{E}_{\mu_{i}}(f)& - m_{i}(f) \vert \, \leq \, \int_{X} \vert f(x) - m_{i}(f) \vert \, d\mu_{i}(x) \\
	&\leq \, \int_{X \setminus D_{i}(f)} \vert f(x) - m_{i}(f) \vert \, d\mu_{i}(x) \, + \, \int_{D_{i}(f)} \vert f(x) - m_{i}(f) \vert \, d\mu_{i}(x) \\
	&\leq \, \tfrac{\epsilon}{2} + 2\Vert f \Vert_{\infty} \mu_{i}(D_{i}(f)) \, \leq \, \tfrac{\epsilon}{2} + \tfrac{\epsilon}{2} \, = \, \epsilon
\end{align*} for every $f \in H$, i.e., $\sup_{f \in H} \vert \mathbb{E}_{\mu_{i}}(f) - m_{i}(f) \vert \leq \epsilon$. This shows that \begin{displaymath}
	\sup\nolimits_{f \in H} \vert \mathbb{E}_{\mu_{i}}(f) - m_{i}(f) \vert \to 0 .\qedhere
\end{displaymath} \end{proof}

\begin{cor}\label{corollary:concentration} Let $X$ be a uniform space and let $(\mu_{i})_{i \in I}$ be a net of Borel probability measures on $X$, concentrating in $X$. Let $H \in \mathrm{UEB}(X)$ and $\epsilon > 0$. Then \begin{displaymath}
	\sup\nolimits_{f \in H} \mu_{i} (\{ x \in X \mid \vert f(x) - \mathbb{E}_{\mu_{i}}(f) \vert > \epsilon \}) \to 0 .
\end{displaymath} \end{cor}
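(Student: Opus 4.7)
The plan is to reduce the statement to Lemma~\ref{lemma:concentration} by replacing each median $m_i(f)$ with the corresponding expectation $\mathbb{E}_{\mu_i}(f)$ via a standard triangle-inequality swap. Recall that the lemma supplies two pieces of information, both uniform over $f \in H$: that the mass far from the median vanishes, and that the expectation and the median themselves become uniformly close.

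Fix $\epsilon > 0$, and for every pair $(i,f) \in I \times H$ pick a median $m_{i}(f)$ of $f$ with respect to $\mu_{i}$. The key observation is the elementary inclusion
\[
\{ x \in X \mid \vert f(x) - \mathbb{E}_{\mu_{i}}(f) \vert > \epsilon \} \subseteq \{ x \in X \mid \vert f(x) - m_{i}(f) \vert > \tfrac{\epsilon}{2} \},
\]
valid whenever $\vert \mathbb{E}_{\mu_{i}}(f) - m_{i}(f) \vert \leq \tfrac{\epsilon}{2}$, which follows immediately from the triangle inequality.

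I would then invoke the second assertion of Lemma~\ref{lemma:concentration} to obtain an index $i_{0} \in I$ such that $\sup_{f \in H} \vert \mathbb{E}_{\mu_{i}}(f) - m_{i}(f) \vert \leq \tfrac{\epsilon}{2}$ for all $i \geq i_{0}$; for such $i$ the above inclusion holds simultaneously for every $f \in H$. Monotonicity of $\mu_{i}$ then yields
\[
\sup\nolimits_{f \in H} \mu_{i}(\{ x \in X \mid \vert f(x) - \mathbb{E}_{\mu_{i}}(f) \vert > \epsilon \}) \, \leq \, \sup\nolimits_{f \in H} \mu_{i}(\{ x \in X \mid \vert f(x) - m_{i}(f) \vert > \tfrac{\epsilon}{2} \}),
\]
and the first assertion of Lemma~\ref{lemma:concentration}, applied to the threshold $\tfrac{\epsilon}{2}$, forces the right-hand side to tend to $0$. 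This gives the corollary. There is no genuine obstacle: the corollary is essentially a bookkeeping consequence of the lemma, and the only point to be careful about is that both the median-concentration bound and the median-to-mean comparison in Lemma~\ref{lemma:concentration} are already uniform in $f \in H$, so that a single index $i_{0}$ suffices for the whole family.
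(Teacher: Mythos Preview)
Your proof is correct and follows essentially the same approach as the paper: both arguments pick medians $m_i(f)$, use the second part of Lemma~\ref{lemma:concentration} to get $\sup_{f\in H}\lvert \mathbb{E}_{\mu_i}(f)-m_i(f)\rvert\le \epsilon/2$ eventually, then invoke the inclusion $\{\lvert f-\mathbb{E}_{\mu_i}(f)\rvert>\epsilon\}\subseteq\{\lvert f-m_i(f)\rvert>\epsilon/2\}$ and the first part of the lemma at threshold $\epsilon/2$. The only cosmetic difference is that the paper introduces an auxiliary $\delta>0$ explicitly, while you phrase the final step as ``the right-hand side tends to~$0$''.
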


\begin{proof} For each pair $(i,f) \in I \times H$, let $m_{i}(f)$ be a median of $f$ with respect to $\mu_{i}$. Let $\epsilon,\delta >0$. We need to show that $\sup\nolimits_{f \in H} \mu_{i} (\{ x \in X \mid \vert f(x) - \mathbb{E}_{\mu_{i}}(f) \vert > \epsilon \}) \leq \delta$. By Lemma~\ref{lemma:concentration}, there exists $i_{0} \in I$ such that, for every $i \in I$ with $i \geq i_{0}$, both \begin{displaymath}
	\sup\nolimits_{f \in H} \mu_{i}\left(\left\{ x \in X \left\vert \vert f(x) - m_{i}(f) \vert > \tfrac{\epsilon}{2} \right\}\right) \leq \delta \right.
\end{displaymath} and $\sup_{f \in H} \vert \mathbb{E}_{\mu_{i}}(f) - m_{i}(f) \vert \leq \tfrac{\epsilon}{2}$. Hence, if $i \in I$ with $i \geq i_{0}$, then \begin{displaymath}
	\mu_{i}(\{ x \in X \mid \vert f(x) - \mathbb{E}_{\mu_{i}}(f) \vert > \epsilon \}) \leq \mu_{i}\left(\left\{ x \in X \left| \, \vert f(x) - m_{i}(f) \vert > \tfrac{\epsilon}{2} \right\}\right) \leq \delta \right.
\end{displaymath} for all $f \in H$, as desired. \end{proof}

Utilizing the corollary above, we obtain a sufficient criterion for whirly amenability of topological groups in terms of measure concentration. Let us agree on some additional terminology.

\begin{definition} Let $G$ be a topological group. A net $(\mu_{i})_{i \in I}$ of Borel probability measures on $G$ is said to \emph{converge to invariance in the UEB topology (over $G$)} if \begin{displaymath}
	\sup\nolimits_{f \in H} \vert \mathbb{E}_{\mu_{i}}(f) - \mathbb{E}_{\mu_{i}}(f \circ \lambda_{g}) \vert \to 0
\end{displaymath} for all $H \in \mathrm{RUEB}(G)$ and $g \in G$. \end{definition}

The following result for Polish groups was proven in~\cite[Theorem~5.7]{pestov10}. We include a full proof of this slight generalization for the sake of convenience.

\begin{thm}[\cite{pestov10}, Theorem~5.7]\label{theorem:whirly.groups} Let $G$ be a topological group. If there exists a net $(\mu_{i})_{i \in I}$ of Borel probability measures on $G$ such that \begin{enumerate}
	\item[$(1)$] $(\mu_{i})_{i \in I}$ concentrates in $G$,
	\item[$(2)$] $(\mu_{i})_{i \in I}$ converges to invariance in the UEB topology,
\end{enumerate} then $G$ is whirly amenable. \end{thm}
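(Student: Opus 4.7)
Amenability is immediate from hypothesis $(2)$: pick a free ultrafilter $\mathcal{U}$ on $I$ refining the tail filter, and define $m(f) \defeq \lim_{i \to \mathcal{U}} \mathbb{E}_{\mu_{i}}(f)$ for $f \in \mathrm{RUC}_{b}(G)$. The limit exists because $\lvert \mathbb{E}_{\mu_{i}}(f) \rvert \leq \Vert f \Vert_{\infty}$; $m$ inherits positivity, linearity, and $m(\mathbf{1}) = 1$ from the $\mu_{i}$; and applying $(2)$ to singleton UEB sets yields $m(f \circ \lambda_{g}) = m(f)$ for every $g \in G$, so $m$ is a left-invariant mean. For the whirly property, fix a compact $G$-space $X$ with an invariant regular Borel probability measure $\nu$. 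The plan is to establish
\begin{equation*}
\int_{X} (\phi(gx) - \phi(x))^{2} \, d\nu(x) = 0 \qquad (\phi \in C(X),\, g \in G) .
\end{equation*}
Because $(\phi \circ \lambda^{X}_{g} - \phi)^{2}$ is continuous and non-negative (here $\lambda^{X}_{g}$ denotes left translation on $X$), this forces it to vanish identically on $\mathrm{supp}(\nu)$; as $C(X)$ separates points of the compact Hausdorff space $X$, one concludes $\mathrm{supp}(\nu) \subseteq \mathrm{Fix}(G,X)$ and hence $\nu(\mathrm{Fix}(G,X)) = 1$.

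By $G$-invariance of $\nu$ (which gives $\int \phi(gx)^{2}\, d\nu = \int \phi^{2}\, d\nu$), the displayed identity reduces to $\int \phi(gx)\phi(x)\, d\nu(x) = \int \phi(x)^{2}\, d\nu(x)$. To prove it, introduce the orbit maps $\beta_{x} \colon G \to X$, $h \mapsto hx$, and note that joint continuity of the action together with compactness of $X$ ensures (via the usual tube-lemma argument, which makes $\{u \in G \mid (uy,y) \in V \text{ for all } y \in X\}$ a neighbourhood of the neutral element for every entourage $V$ of $X$) that $H_{\phi} \defeq \{\phi \circ \beta_{x} \mid x \in X\} \in \mathrm{RUEB}(G)$, and similarly $H_{\phi \circ \lambda^{X}_{g}} \in \mathrm{RUEB}(G)$. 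Set $A^{i}_{\phi}(x) \defeq \int_{G} \phi(hx)\, d\mu_{i}(h)$ and $B^{i}_{\phi}(x,g) \defeq \int_{G} \phi(ghx)\, d\mu_{i}(h)$. Hypothesis $(2)$ applied to $H_{\phi}$ and $\lambda_{g}$ gives $\sup_{x \in X} \lvert A^{i}_{\phi}(x) - B^{i}_{\phi}(x,g) \rvert \to 0$, and hypothesis $(1)$ together with Corollary~\ref{corollary:concentration}, applied to $H_{\phi}$ and to $H_{\phi \circ \lambda^{X}_{g}}$ and combined with the uniform bound $\Vert \phi \Vert_{\infty}$, upgrades the probability concentration to the $L^{1}$-statements $\sup_{x} \int_{G} \lvert \phi(hx) - A^{i}_{\phi}(x) \rvert\, d\mu_{i}(h) \to 0$ and $\sup_{x} \int_{G} \lvert \phi(ghx) - B^{i}_{\phi}(x,g) \rvert\, d\mu_{i}(h) \to 0$.

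Finally, apply $\nu$-invariance pointwise, $\int_{X} \phi(gx)\phi(x)\, d\nu(x) = \int_{X} \phi(ghx)\phi(hx)\, d\nu(x)$ for every $h \in G$, and average against $\mu_{i}$; Fubini (justified by joint continuity of the action) gives
\begin{equation*}
\int_{X} \phi(gx)\phi(x)\, d\nu(x) \,=\, \int_{X} \int_{G} \phi(ghx) \phi(hx)\, d\mu_{i}(h)\, d\nu(x) .
\end{equation*}
Writing $\phi(ghx) = B^{i}_{\phi}(x,g) + (\phi(ghx) - B^{i}_{\phi}(x,g))$ inside the inner integral and using $\int_{G} (\phi(hx) - A^{i}_{\phi}(x))\, d\mu_{i}(h) = 0$ shows that the inner integral equals $A^{i}_{\phi}(x) B^{i}_{\phi}(x,g)$ up to an error bounded uniformly in $x$ by $\Vert \phi \Vert_{\infty} \sup_{x} \int_{G} \lvert \phi(ghx) - B^{i}_{\phi}(x,g) \rvert\, d\mu_{i}$, which vanishes as $i \to \infty$. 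Hence $\int \phi(gx)\phi(x)\, d\nu = \lim_{i} \int_{X} A^{i}_{\phi}(x) B^{i}_{\phi}(x,g)\, d\nu(x)$; by $(2)$ this limit equals $\lim_{i} \int_{X} A^{i}_{\phi}(x)^{2}\, d\nu(x)$; and specializing the whole argument to $g = e$ (where $B^{i}_{\phi}(\cdot,e) = A^{i}_{\phi}$) identifies the limit with $\int_{X} \phi(x)^{2}\, d\nu(x)$, closing the loop. The main technical obstacle throughout is the uniform-in-$x$ bookkeeping: the concentration from $(1)$ and the asymptotic invariance from $(2)$ must both be exercised on the \emph{same} RUEB family $H_{\phi}$, so that the resulting error terms are uniform across $x \in X$ and can be absorbed by the outer integration against $\nu$.
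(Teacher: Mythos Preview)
Your argument is correct and follows essentially the same route as the paper: you build the same RUEB family $H_{\phi}=\{f_x\}$ of orbit functions, invoke hypothesis~(2) for the uniform-in-$x$ invariance estimate, invoke hypothesis~(1) together with Corollary~\ref{corollary:concentration} to pass from concentration in measure to the uniform $L^{1}$ deviation estimate, and then close with $G$-invariance of $\nu$ and Fubini. The only cosmetic difference is that the paper targets $\int_X \lvert f(x)-f(gx)\rvert\,d\nu=0$ directly via the triangle inequality $\lvert f(hx)-f(ghx)\rvert\le\lvert f_x(h)-\mathbb{E}_{\mu_i}f_x\rvert+\lvert\mathbb{E}_{\mu_i}f_x-\mathbb{E}_{\mu_i}(f_x\circ\lambda_g)\rvert+\lvert\mathbb{E}_{\mu_i}(f_x\circ\lambda_g)-f_x(gh)\rvert$, whereas you reach the same conclusion through the $L^{2}$ identity $\int\phi(gx)\phi(x)\,d\nu=\lim_i\int A^{i}_{\phi}(x)^{2}\,d\nu=\int\phi^{2}\,d\nu$; this detour is harmless but not needed, since the $L^{1}$ estimate you already establish suffices.
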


\begin{proof} Let $(\mu_{i})_{i \in I}$ be a net as above. Then Theorem~\ref{theorem:topological.day} asserts that $G$ is amenable: note that the implication ($\Longleftarrow$) in Theorem~\ref{theorem:topological.day} is valid even without the measures $(\mu_{i})_{i \in I}$ being finitely supported. To see that $G$ is actually whirly amenable, we will follow the lines of~\cite[Proof of Theorem~5.7]{pestov10}. Consider a compact $G$-space $X$ and let $\mu$ by a $G$-invariant regular Borel probability measure on $X$. We want to show that $\mu$ is supported on the set of $G$-fixed points. By a standard application of Urysohn's lemma, it suffices to prove that \begin{displaymath}
	\forall g \in G \, \forall f \in \mathrm{C}(X,[0,1]) \colon \quad \int \vert f(x) - f(gx) \vert \, d\mu (x) = 0 .
\end{displaymath} Let $f \in \mathrm{C}(X,[0,1])$, $g \in G$ and $\epsilon > 0$. For $x \in X$, let $f_{x} \colon G \to [0,1], \, h \mapsto f(hx)$. We observe that $\{ f_{x} \mid x \in X \} \in \mathrm{RUEB}(G)$. To see this, let $\delta > 0$. Since the map $G \times X \to \mathbb{R}, \, (g,x) \mapsto f(gx)$ is continuous, for each $x \in X$ we find $U_{x} \in \mathscr{U}(G)$ and an open neighborhood $V_{x}$ of $x$ in $X$ such that $\diam f(U_{x}V_{x}) \leq \delta$. By compactness of $X$, there exist $U_{1},\ldots,U_{n} \in \mathscr{U}(G)$ and open sets $V_{1},\ldots,V_{n} \subseteq X$ such that $\bigcup_{i=1}^{n} V_{i} = X$ and $\diam f(U_{i}V_{i}) \leq \delta$ for each $i \in \{ 1,\ldots,n \}$. Of course, $U \defeq \bigcap_{i=1}^{n}U_{i} \in \mathscr{U}(G)$. Moreover, if $g,h \in G$ and $gh^{-1} \in U$, then \begin{displaymath}
	\vert f_{x}(g) - f_{x}(h) \vert = \vert f((gh^{-1})(hx)) - f(hx) \vert \leq \delta
\end{displaymath} for all $x \in X$, which shows that $\{ f_{x} \mid x \in X \}$ indeed belongs to $\mathrm{RUEB}(G)$. Hence, by~(2), there exists some $i_{0} \in I$ such that, for all $i \in I$ with $i \geq i_{0}$, \begin{displaymath}
	\sup\nolimits_{x \in X} \vert \mathbb{E}_{\mu_{i}} (f_{x}) - \mathbb{E}_{\mu_{i}}(f_{x} \circ \lambda_{g}) \vert \leq \tfrac{\epsilon}{5} .
\end{displaymath} Furthermore, since both $\{ f_{x} \mid x \in X \}$ and $\{ f_{x} \circ \lambda_{g} \mid x \in X \}$ belong to $\mathrm{RUEB}(G)$, we may apply~(1) and Corollary~\ref{corollary:concentration} find some $i_{1} \in I$ with $i_{1} \geq i_{0}$ such that, for all $i \in I$ with $i \geq i_{1}$, we have \begin{displaymath}
	\sup\nolimits_{x \in X} \mu_{i} \left(\left\{ h \in G \left| \, \vert f_{x}(h) - \mathbb{E}_{\mu_{i}}(f_{x}) \vert > \tfrac{\epsilon}{5} \right\}\right) \leq \tfrac{\epsilon}{5} \right.
\end{displaymath} as well as \begin{displaymath}
	\sup\nolimits_{x \in X} \mu_{i} \left(\left\{ h \in G \left| \, \vert f_{x}(gh) - \mathbb{E}_{\mu_{i}}(f_{x} \circ \lambda_{g}) \vert > \tfrac{\epsilon}{5} \right\}\right) \leq \tfrac{\epsilon}{5} . \right.
\end{displaymath} From the assertions above, we deduce that \begin{align*}
	\int \vert & f(hx) - f(ghx) \vert \, d\mu_{i} (h) \\
	&\leq \, \int \vert f_{x}(h) - \mathbb{E}_{\mu_{i}}(f_{x}) \vert \, d\mu_{i}(h) + \tfrac{\epsilon}{5} + \int \vert \mathbb{E}_{\mu_{i}}(f_{x} \circ \lambda_{g}) - f_{x}(gh) \vert \, d\mu_{i}(h) \\
	&\leq \, \tfrac{2\epsilon}{5} + \tfrac{\epsilon}{5} + \tfrac{2\epsilon}{5} \, = \, \epsilon 
\end{align*} for all $x \in X$ and $i \in I$, $i \geq i_{1}$. Thus, by $G$-invariance of $\mu$ and Fubini's theorem, \begin{align*}
	\int \vert f(x) - f(gx) \vert \, d\mu (x) \, &= \int \int \vert f(hx) - f(ghx) \vert \, d\mu (x) \, d\mu_{i_{1}}(h) \\
	&= \int \int \vert f(hx) - f(ghx) \vert \, d\mu_{i_{1}} (h) \, d\mu(x) \, \leq \, \epsilon .
\end{align*} This completes the argument. \end{proof}

\section{Proving Theorem~\ref{theorem:main}}\label{section:final}

In this section we shall prove Theorem~\ref{theorem:main}. Let $G$ be a topological group. As stated in the introduction, we endow $L^{0}(G)$ with the topology of convergence in measure. More explicitly, the sets of the form \begin{displaymath}
	N(U,\epsilon) \defeq \{ f \in L^{0}(G) \mid \lambda (\{ x \in [0,1] \mid f(x) \notin U \}) < \epsilon \} \quad (U \in \mathscr{U}_{o}(G), \, \epsilon > 0)
\end{displaymath} constitute a neighborhood basis of the neutral element in $L^{0}(G)$. Given a natural number $n \geq 1$, let us define $h_{n} \colon G^{n} \to L^{0}(G)$ by setting \begin{displaymath}
	h_{n}(g)\vert_{[(i-1)/n,i/n)} \equiv g_{i} \qquad (i \in \{ 1,\ldots,n \}) 
\end{displaymath} for every $g \in G^{n}$. Of course, $h_{n}$ is an embedding of the topological group $G^{n}$ into $L^{0}(G)$ for every $n \geq 1$. Moreover, we note the following fact.

\begin{lem}\label{lemma:density} Let $G$ be a topological group. Let $f \in L^{0}(G)$ and $U \in \mathscr{U}(L^{0}(G))$. Then there exist a finite subset $E \subseteq G$ and $m \in \mathbb{N}$ such that \begin{displaymath}
	\forall n \in \mathbb{N}, \, n \geq m \colon \quad f \in Uh_{n}(E^{n}) .
\end{displaymath} \end{lem}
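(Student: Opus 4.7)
The plan is to reduce to a basic open neighbourhood $U = N(V,\epsilon)$ of the identity in $L^{0}(G)$ and then construct the finite set $E \subseteq G$ and the threshold $m$ explicitly from the data of $f$. Fix a symmetric open $V_{1} \in \mathscr{U}_{o}(G)$ with $V_{1}V_{1} \subseteq V$. Because $f \in L^{0}(G)$ is (represented by) a strongly $\lambda$-measurable map, we can choose a closed set $A \subseteq [0,1]$ with $\lambda([0,1]\setminus A) < \epsilon$ such that $f\vert_{A} \colon A \to G$ is continuous.

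Next I would exploit compactness of $A$ in two ways. First, the image $f(A) \subseteq G$ is compact, so it is covered by finitely many right translates $V_{1}e_{1}, \ldots, V_{1}e_{k}$; I set $E \defeq \{ e_{1},\ldots,e_{k}\}$. Second, since the right uniformity of $G$ admits the basic entourage $\{ (u,v) \mid uv^{-1} \in V_{1}\}$ and $f\vert_{A}$ is continuous on a compact metric space, Heine--Cantor gives $\delta > 0$ such that whenever $x,y \in A$ and $\vert x - y \vert < \delta$, one has $f(x)f(y)^{-1} \in V_{1}$. Pick $m \in \mathbb{N}$ with $1/m < \delta$.

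Now fix $n \geq m$ and define $g \in E^{n}$ coordinate-wise: for each $i \in \{1,\ldots,n\}$ whose interval $I_{i} \defeq [(i-1)/n,i/n)$ meets $A$, pick any $a_{i} \in I_{i} \cap A$ and choose $g_{i} \in E$ such that $f(a_{i})g_{i}^{-1} \in V_{1}$; for the remaining $i$ choose $g_{i} \in E$ arbitrarily. For any $x \in I_{i} \cap A$ with $I_{i} \cap A \ne \emptyset$, we have $\vert x - a_{i} \vert < 1/n < \delta$, whence
\begin{displaymath}
  f(x)g_{i}^{-1} \,=\, \bigl(f(x)f(a_{i})^{-1}\bigr)\bigl(f(a_{i})g_{i}^{-1}\bigr) \,\in\, V_{1}V_{1} \,\subseteq\, V.
\end{displaymath}
Therefore the set $\{ x \in [0,1] \mid f(x)h_{n}(g)(x)^{-1} \notin V\}$ is contained in $[0,1] \setminus A$, which has $\lambda$-measure strictly less than $\epsilon$. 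Hence $f \cdot h_{n}(g)^{-1} \in N(V,\epsilon) = U$, i.e.\ $f \in U h_{n}(E^{n})$.

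I do not expect any genuinely hard obstacle: the only point requiring care is ensuring that the uniform continuity statement is phrased with respect to the right uniformity of $G$, since we need a bound of the form $f(x)f(y)^{-1} \in V_{1}$ (matching the definition of $N(V,\epsilon)$) rather than $f(y)^{-1}f(x) \in V_{1}$. This is automatic from the standard Heine--Cantor argument once one notes that continuity of $f\vert_{A}$ into $(G,\text{right uniformity})$ is the same as topological continuity into $G$.
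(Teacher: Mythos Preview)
Your proof is correct and follows essentially the same route as the paper's: reduce to a basic neighbourhood $N(V,\epsilon)$, use strong measurability to find a compact set $A$ on which $f$ is continuous, cover $f(A)$ by finitely many right translates of a smaller neighbourhood $V_{1}$ with $V_{1}V_{1}\subseteq V$, use uniform continuity to choose the threshold $m$, and then verify that the bad set lies in $[0,1]\setminus A$. The only cosmetic differences are that the paper includes the identity in $E$ (so that the ``arbitrary'' choice on intervals missing $A$ is canonical) and explicitly excludes the point $1$ when checking the inclusion; neither affects the argument.
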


\begin{proof} Let $V \in \mathscr{U}_{o}(G)$ and $\epsilon > 0$ with $N(V,\epsilon) \subseteq U$. As $f$ is strongly $\lambda$-measurable, there is a closed subset $C \subseteq [0,1]$ such that $\lambda ([0,1]\setminus C) < \epsilon$ and $f\vert C$ is continuous. Since $C$ is compact, $f(C)$ is a compact subset of $G$, and $f\vert_{C} \colon C \to G$ is uniformly continuous. Choose $W \in \mathscr{U}(G)$ such that $W^{2} \subseteq V$. Due to $f(C)$ being compact, there exists a finite subset $E \subseteq G$ with $f(C) \subseteq WE$ and $e \in E$. By uniform continuity of $f\vert_{C}$, there furthermore exists a natural number $m \geq 1$ such that \begin{displaymath}
	\forall x,y \in C \colon \quad \vert x-y \vert \leq \tfrac{1}{m} \, \Longrightarrow \, f(x)f(y)^{-1} \in W .
\end{displaymath} We claim that the pair $(E,m)$ is as desired. To see this, let $n \in \mathbb{N}$, $n \geq m$. Put \begin{displaymath}
	I \defeq \left\{ i \in \{ 1,\ldots,n \} \left\vert \, \left[\tfrac{i-1}{n},\tfrac{i}{n}\right) \cap C \ne \emptyset \right\} . \right.
\end{displaymath} For each $i \in I$, pick some element $x_{i} \in \left[\tfrac{i-1}{n},\tfrac{i}{n}\right) \cap C$ and then choose any $g_{i} \in E$ with $f(x_{i}) \in Wg_{i}$. Extend $(g_{i})_{i \in I}$ to a tuple $g \in E^{n}$ by setting $g_{i} \defeq e$ for each $i \in \{ 1,\ldots,n \} \setminus I$. We claim that $f \in Uh_{n}(g)$. For each $x \in C\setminus \{ 1 \}$, there exists some $i \in \{ 1,\ldots,n \}$ with $x \in \left[ \tfrac{i-1}{n},\tfrac{i}{n} \right)$, which implies that $i \in I$ and thus \begin{displaymath}
	f(x) \in Wf(x_{i}) \subseteq WWg_{i} \subseteq Vg_{i} = V h_{n}(g)(x) .
\end{displaymath} This shows that \begin{displaymath}
	B \defeq \left\{ x \in [0,1] \left| f(x)h_{n}(g)(x)^{-1} \notin V \right\} \right. \subseteq \, [0,1]\setminus C . 
\end{displaymath} Therefore, $\lambda (B) \leq \lambda ([0,1]\setminus C) < \epsilon$ and so $fh_{n}(g)^{-1} \in N(V,\epsilon) \subseteq Uh_{n}(g)$. \end{proof}

\begin{cor}\label{corollary:density} Let $G$ be a topological group. Then $L^{0}(G) = \overline{\bigcup \{ h_{n}(G^{n}) \mid n\geq 1 \}}$. \end{cor}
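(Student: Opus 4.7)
The plan is to derive this corollary as an essentially immediate consequence of Lemma~\ref{lemma:density}, by translating the conclusion ``$f \in Uh_{n}(E^{n})$ for all sufficiently large $n$'' into a closure statement.

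First I would observe that the inclusion $\overline{\bigcup_{n\geq 1} h_{n}(G^{n})} \subseteq L^{0}(G)$ is trivial, since each $h_{n}$ maps $G^{n}$ into $L^{0}(G)$ and the ambient group is closed in itself.

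For the reverse inclusion, fix an arbitrary $f \in L^{0}(G)$ and an arbitrary neighbourhood $U \in \mathscr{U}(L^{0}(G))$ of the identity. I would apply Lemma~\ref{lemma:density} to $f$ and $U$ to obtain a finite subset $E \subseteq G$ and an integer $m \geq 1$ such that $f \in Uh_{n}(E^{n})$ for every $n \geq m$. Picking any such $n$ (for instance $n = m$) and any $g \in E^{n} \subseteq G^{n}$ realising the membership, we get $fh_{n}(g)^{-1} \in U$, i.e.\ $h_{n}(g) \in U^{-1}f$. Since $U$ was an arbitrary neighbourhood of the neutral element and neighbourhoods of the form $U^{-1}f$ (equivalently, $fU$ after shrinking $U$ to a symmetric one) form a neighbourhood basis of $f$, this shows that every neighbourhood of $f$ meets $\bigcup_{n\geq 1} h_{n}(G^{n})$, whence $f \in \overline{\bigcup_{n\geq 1} h_{n}(G^{n})}$.

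There is no real obstacle here; the only minor point to handle cleanly is the passage from ``$fh_{n}(g)^{-1} \in U$'' to ``$h_{n}(g)$ lies in a prescribed neighbourhood of $f$,'' which just uses the fact that inversion and right translation by $f$ are homeomorphisms of $L^{0}(G)$, so one may replace $U$ by a symmetric neighbourhood at the outset if desired.
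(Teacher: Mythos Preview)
Your proposal is correct and is precisely the intended deduction: the paper states the corollary with no separate proof, treating it as an immediate consequence of Lemma~\ref{lemma:density}, and your argument spells out exactly that implication. The only cosmetic remark is that you could start directly with a symmetric $U$ (or apply the lemma to $U^{-1}$), avoiding the need to discuss the passage from $U$ to $U^{-1}f$ at the end.
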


Furthermore, we will need the subsequent simple observation about UEB sets. Let $G$ be a topological group. For $n \in \mathbb{N}\setminus \{ 0 \}$, $i \in \{ 1,\ldots,n \}$ and $a \in G^{n-1}$, let \begin{displaymath}
	c_{i,a} \colon G \to G^{n} , \quad x \mapsto (a_{1},\ldots ,a_{i-1},x,a_{i},\ldots,a_{n-1}) .
\end{displaymath} Moreover, for a subset $H \subseteq \mathrm{RUC}_{b}(L^{0}(G))$, let \begin{displaymath}
	[H] \defeq \left\{ f \circ h_{n} \circ c_{i,a} \left\vert \, f \in H, \, n \in \mathbb{N}\setminus \{ 0 \}, \, i \in \{ 1,\ldots,n \}, \, a \in G^{n-1} \right\} . \right.
\end{displaymath} The following is an immediate consequence of the definitions.

\begin{lem}\label{lemma:ueb} Let $G$ be a topological group. Then \begin{displaymath}
	\forall H \in \mathrm{RUEB}(L^{0}(G)) \colon \quad [H] \in \mathrm{RUEB}(G) .
\end{displaymath} \end{lem}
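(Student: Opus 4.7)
The plan is to verify separately the two defining properties of a RUEB set: uniform $\|\cdot\|_\infty$-boundedness and uniform equicontinuity with respect to the right uniformity of $G$.

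Boundedness is immediate. Since each element of $[H]$ has the form $f \circ h_n \circ c_{i,a}$, we have $\Vert f \circ h_n \circ c_{i,a} \Vert_\infty \leq \Vert f \Vert_\infty$, so that $\sup_{g \in [H]} \Vert g \Vert_\infty \leq \sup_{f \in H} \Vert f \Vert_\infty < \infty$.

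For uniform equicontinuity, the key observation is that the maps $h_n \circ c_{i,a} \colon G \to L^0(G)$ are uniformly continuous with a modulus that is independent of $n \geq 1$, $i \in \{1,\ldots,n\}$, and $a \in G^{n-1}$. More precisely, for every $V \in \mathscr{U}_o(G)$ and every $\epsilon' > 0$, I claim that if $xy^{-1} \in V$, then
\begin{displaymath}
    h_n(c_{i,a}(x)) \cdot h_n(c_{i,a}(y))^{-1} \in N(V,\epsilon') .
\end{displaymath}
Indeed, by construction of $c_{i,a}$ and $h_n$, the product $h_n(c_{i,a}(x)) \cdot h_n(c_{i,a}(y))^{-1}$ is the constant $xy^{-1}$ on the interval $[(i-1)/n, i/n)$ and equals the neutral element of $G$ outside that interval. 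Since both $xy^{-1}$ and the neutral element lie in $V$, the set where this function takes values outside $V$ is $\lambda$-null, whence it belongs to $N(V, \epsilon')$.

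Now let $\epsilon > 0$. Since $H \in \mathrm{RUEB}(L^0(G))$, there exist $V \in \mathscr{U}_o(G)$ and $\epsilon' > 0$ such that $\varphi \psi^{-1} \in N(V,\epsilon')$ implies $\vert f(\varphi) - f(\psi) \vert \leq \epsilon$ for every $f \in H$. Combining this with the observation above, whenever $x,y \in G$ satisfy $xy^{-1} \in V$, any element $g = f \circ h_n \circ c_{i,a} \in [H]$ satisfies $\vert g(x) - g(y) \vert \leq \epsilon$. This is precisely the assertion that $[H]$ is uniformly equicontinuous with respect to the right uniformity on $G$, completing the proof. There is no real obstacle here; the only point requiring care is the explicit computation of the ratio $h_n(c_{i,a}(x)) \cdot h_n(c_{i,a}(y))^{-1}$ and the remark that the estimate is independent of the parameters $n$, $i$, $a$.
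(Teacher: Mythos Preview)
Your proof is correct and follows essentially the same approach as the paper's: both arguments reduce to the computation that $h_n(c_{i,a}(x))\,h_n(c_{i,a}(y))^{-1}$ equals $xy^{-1}$ on one subinterval and $e$ elsewhere, hence lies in $N(V,\epsilon')$ whenever $xy^{-1}\in V$, which combined with the uniform equicontinuity of $H$ yields the claim. The paper phrases this via the algebraic identity $h_n(c_{i,a}(x))h_n(c_{i,a}(y))^{-1}=h_n(c_{i,(e,\ldots,e)}(xy^{-1}))$, while you spell out the piecewise description, but the content is identical.
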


\begin{proof} Let $H \in \mathrm{RUEB}(L^{0}(G))$. Evidently, $[H]$ is norm-bounded as $H$ is. To prove uniform equicontinuity, let $\epsilon > 0$. Since $H \in \mathrm{RUEB}(L^{0}(G))$, there is $U \in \mathscr{U}(L^{0}(G))$ such that $\vert f(x) - f(y) \vert \leq \epsilon$ for all $f \in H$ and $x,y \in L^{0}(G)$ with $xy^{-1} \in U$. Fix any $V \in \mathscr{U}_{o}(G)$ and $\delta > 0$ so that $N(V,\delta) \subseteq U$. We claim that $\vert f'(x) - f'(y) \vert \leq \epsilon$ for all $f' \in [H]$ and $x,y \in G$ with $xy^{-1} \in V$. Let $n \in \mathbb{N}\setminus \{ 0 \}$, $i \in \{ 1,\ldots,n \}$, $a \in G^{n-1}$, and $f \in H$. For any $x,y \in G$ with $xy^{-1} \in V$, it follows that \begin{displaymath}
	h_{n}(c_{i,a}(x))h_{n}(c_{i,a}(y))^{-1} = h_{n}\left( c_{i,a}(x)c_{i,a}(y)^{-1} \right) = h_{n}\left( c_{i,(e,\ldots,e)}\left(xy^{-1}\right) \right) \in N(V,\delta)
\end{displaymath} and therefore $\vert f(h_{n}(c_{i,a}(x))) - f(h_{n}(c_{i,a}(y))) \vert \leq \epsilon$. This proves our claim and hence shows that $[H]$ belongs to $\mathrm{RUEB}(G)$. \end{proof}

Next we give the central argument for the proof of Theorem~\ref{theorem:main}.

\begin{lem}\label{lemma:convergence.to.invariance} Let $G$ be a topological group. Suppose that $(n_{i},\mu_{i})_{i \in I}$ is a net in $(\mathbb{N}\setminus \{ 0 \}) \times \mathrm{Pr}_{\mathrm{B}}(G)$ such that $n_{i} \to \infty$ and \begin{displaymath}
	\forall H \in \mathrm{RUEB}(G) \, \forall g \in G \colon \quad  \sup\nolimits_{f \in H} \vert \mathbb{E}_{\mu_{i}}(f) - \mathbb{E}_{\mu_{i}}(f \circ \lambda_{g}) \vert \cdot n_{i} \to 0 .
\end{displaymath} Then $\left( ( h_{n_{i}})_{\ast}\left(\mu_{i}^{\otimes n_{i}}\right) \right)_{i \in I}$ converges to invariance in the UEB topology (over~$L^{0}(G)$). \end{lem}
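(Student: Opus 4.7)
The plan is to show, for each $H' \in \mathrm{RUEB}(L^{0}(G))$ and $\phi \in L^{0}(G)$, that
\[ \sup_{f' \in H'} \left| \int \bigl( f'(h_{n_{i}}(g)) - f'(\phi \cdot h_{n_{i}}(g)) \bigr) \, d\mu_{i}^{\otimes n_{i}}(g) \right| \longrightarrow 0, \]
via a two-stage reduction: first approximate $\phi$ by an image $h_{n_{i}}(a^{(i)})$ with entries in a fixed finite subset of $G$, then swap the $n_{i}$ coordinates one at a time through a hybrid argument whose total cost is controlled by the extra factor of $n_{i}$ in the hypothesis.

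Fix $\epsilon > 0$ and choose a modulus of uniform continuity for $H'$, i.e., $U \in \mathscr{U}(L^{0}(G))$ such that $xy^{-1} \in U$ implies $|f'(x) - f'(y)| \le \epsilon$ for every $f' \in H'$. Invoke Lemma~\ref{lemma:density} to produce a finite $E \subseteq G$ (containing $e$) and $m \in \mathbb{N}$ such that for every $n \ge m$ there exists $a \in E^{n}$ with $\phi \in Uh_{n}(a)$. For $i$ large enough that $n_{i} \ge m$, select such an $a^{(i)} \in E^{n_{i}}$. Since $h_{n_{i}}$ is a group homomorphism $G^{n_{i}} \to L^{0}(G)$, for every $g \in G^{n_{i}}$ one has $\phi \cdot h_{n_{i}}(g) = u \cdot h_{n_{i}}(a^{(i)} \ast g)$ for some $u \in U$, where $a^{(i)} \ast g \defeq (a^{(i)}_{1}g_{1}, \ldots, a^{(i)}_{n_{i}}g_{n_{i}})$. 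Hence $|f'(\phi \cdot h_{n_{i}}(g)) - f'(h_{n_{i}}(a^{(i)} \ast g))| \le \epsilon$ for all $f' \in H'$, reducing the problem (up to an additive error $\epsilon$) to controlling the integral of $f'(h_{n_{i}}(g)) - f'(h_{n_{i}}(a^{(i)} \ast g))$.

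For the second stage, introduce the hybrids $\tau^{(k)}(g) \defeq (a^{(i)}_{1}g_{1}, \ldots, a^{(i)}_{k}g_{k}, g_{k+1}, \ldots, g_{n_{i}})$ for $k = 0, \ldots, n_{i}$, so $\tau^{(0)}(g) = g$ and $\tau^{(n_{i})}(g) = a^{(i)} \ast g$, and telescope the target integral into $n_{i}$ successive differences. For the $k$-th difference, apply Fubini: fixing all coordinates but $g_{k}$ determines some $b \in G^{n_{i}-1}$ with $\tau^{(k-1)}(g) = c_{k,b}(g_{k})$ and $\tau^{(k)}(g) = c_{k,b}(a^{(i)}_{k}g_{k})$, whence the inner integral over $g_{k}$ equals $\mathbb{E}_{\mu_{i}}(F_{b}) - \mathbb{E}_{\mu_{i}}(F_{b} \circ \lambda_{a^{(i)}_{k}})$ where $F_{b} \defeq f' \circ h_{n_{i}} \circ c_{k,b} \in [H']$. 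By Lemma~\ref{lemma:ueb}, $[H'] \in \mathrm{RUEB}(G)$, and since $a^{(i)}_{k} \in E$ with $E$ finite, the hypothesis yields
\[ \delta_{i} \defeq \max_{g \in E} \sup_{F \in [H']} n_{i} \cdot \bigl| \mathbb{E}_{\mu_{i}}(F) - \mathbb{E}_{\mu_{i}}(F \circ \lambda_{g}) \bigr| \longrightarrow 0. \]
Each of the $n_{i}$ telescope terms is therefore bounded by $\delta_{i}/n_{i}$, their sum by $\delta_{i}$, and the total error by $\epsilon + \delta_{i}$. Since $\delta_{i} \to 0$ and $\epsilon > 0$ was arbitrary, the conclusion follows.

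The main obstacle is the $n_{i}$-fold hybrid: each coordinate swap incurs one invocation of the hypothesis, and the additional factor $n_{i}$ in the hypothesis is exactly what is needed to absorb the $n_{i}$ terms of the telescope. A more subtle issue is that the approximating tuple $a^{(i)}$ must have entries in a single finite set $E \subseteq G$ independent of $i$, because the hypothesis is only qualitative in $g$ and finiteness is needed to collapse the choice of $a^{(i)}_{k}$ into a single convergent quantity $\delta_{i}$; Lemma~\ref{lemma:density} is precisely tailored to supply such an $E$.
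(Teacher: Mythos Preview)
Your proof is correct and follows essentially the same approach as the paper's: approximate the given element of $L^{0}(G)$ by $h_{n_i}(a^{(i)})$ with $a^{(i)}\in E^{n_i}$ via Lemma~\ref{lemma:density}, then run a coordinate-by-coordinate hybrid/telescope controlled by Lemma~\ref{lemma:ueb} and the $n_i$-boosted hypothesis. The only cosmetic difference is that the paper writes the hybrids as left-translations $\lambda_{h_{n_i}(a_j)}$ with $a_j=(g'_1,\ldots,g'_j,e,\ldots,e)$ and fixes a threshold $\epsilon/(2n_i)$ in advance, whereas you write the hybrids directly on tuples and package the bound as a vanishing quantity $\delta_i$; these are equivalent formulations.
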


\begin{proof} For each $i \in I$, consider the push-forward $\nu_{i} \defeq ( h_{n_{i}})_{\ast}\left(\mu_{i}^{\otimes n_{i}}\right) \in \mathrm{Pr}_{\mathrm{B}}(L^{0}(G))$. Let $H \in \mathrm{RUEB}(L^{0}(G))$, $g \in L^{0}(G)$ and $\epsilon > 0$. Since $H \in \mathrm{RUEB}(L^{0}(G))$, there exists $U \in \mathscr{U}(L^{0}(G))$ such that $\Vert f - (f \circ \lambda_{u}) \Vert_{\infty} \leq \tfrac{\epsilon}{2}$ for all $f \in H$ and $u \in U$. By Lemma~\ref{lemma:density}, there exist a finite subset $E \subseteq G$ and $m \in \mathbb{N}$ such that \begin{displaymath}
	\forall n \in \mathbb{N}, \, n \geq m \colon \quad g \in Uh_{n}(E^{n}) .
\end{displaymath} Without loss of generality, suppose that $e \in E$. By Lemma~\ref{lemma:ueb} and our assumptions, there exists $i_{0} \in I$ such that, for all $i \in I$ with $i \geq i_{0}$, we have $n_{i} \geq m$ and \begin{displaymath}\tag{$\ast$}\label{assumption}
	\forall s \in E \colon \quad \sup\nolimits_{f \in [H]} \vert \mathbb{E}_{\mu_{i}}(f) - \mathbb{E}_{\mu_{i}}(f \circ \lambda_{s}) \vert \leq \tfrac{\epsilon}{2n_{i}} .
\end{displaymath} We claim that \begin{equation}\tag{$\ast \ast$}\label{claim}
	\forall i \in I , \, i \geq i_{0} \colon \quad \sup\nolimits_{f \in H} \vert \mathbb{E}_{\nu_{i}}(f) - \mathbb{E}_{\nu_{i}}(f \circ \lambda_{g}) \vert \leq \epsilon .
\end{equation} So, let $i \in I$ with $i \geq i_{0}$. Since $n_{i} \geq m$, we find $g' \in E^{n_{i}}$ such that $g = Uh_{n_{i}}(g')$. For each $j \in \{ 1,\ldots,n_{i} \}$, let \begin{align*}
	& a_{j} \defeq \left(g'_{1},\ldots,g'_{j},e,\ldots,e\right) \in E^{n_{i}} , & b_{j} \defeq \left(g'_{1},\ldots,g'_{j-1},e,\ldots,e\right) \in E^{n_{i}-1} .
\end{align*} Also, let $a_{0} \defeq (e,\ldots,e) \in E^{n_{i}}$. For any $j \in \{ 1,\ldots,n_{i}\}$ and $z \in G^{n_{i}-1}$, we observe that $\lambda_{a_{j}} \circ c_{j,z} = c_{j,b_{j}z} \circ \lambda_{g'_{j}}$ and $\lambda_{a_{j-1}} \circ c_{j,z} = c_{j,b_{j}z}$. Combined with~\eqref{assumption} and Fubini's theorem, this implies that \begin{align*}
	\bigl\vert \mathbb{E}_{\nu_{i}}\bigl(f &\circ \lambda_{h_{n_{i}}(a_{j-1})}\bigr) - \mathbb{E}_{\nu_{i}}\bigl(f \circ \lambda_{h_{n_{i}}(a_{j})}\bigr) \bigr\vert \\
	&= \, \left\vert \int \bigl(f \circ \lambda_{h_{n_{i}}(a_{j-1})} \bigr) - \bigl(f \circ \lambda_{h_{n_{i}}(a_{j})} \bigr) \, d\nu_{i} \right\vert \\
	&= \, \left\vert \int \left(f \circ h_{n_{i}} \circ \lambda_{a_{j-1}}\right) - \left(f \circ h_{n_{i}} \circ \lambda_{a_{j}}\right) \, d\mu_{i}^{\otimes n_{i}} \right\vert \\
	&= \, \left\vert \int \mathbb{E}_{\mu_{i}} \!\left(f \circ h_{n_{i}} \circ \lambda_{a_{j-1}} \circ c_{j,z}\right) - \mathbb{E}_{\mu_{i}} \!\left(f \circ h_{n_{i}} \circ \lambda_{a_{j}}\circ c_{j,z}\right) \, d\mu_{i}^{\otimes (n_{i} -1)}(z) \right\vert \\
	&= \, \left\vert \int \mathbb{E}_{\mu_{i}} \!\left(f \circ h_{n_{i}} \circ c_{j,b_{j}z}\right) - \mathbb{E}_{\mu_{i}} \!\left(f \circ h_{n_{i}} \circ c_{j,b_{j}z} \circ \lambda_{g'_{j}} \right) \, d\mu_{i}^{\otimes (n_{i} -1)}(z) \right\vert \\
	&\leq \, \int \left\vert \mathbb{E}_{\mu_{i}}\!\left(f \circ h_{n_{i}} \circ c_{j,b_{j}z}\right) - \mathbb{E}_{\mu_{i}}\!\left(f \circ h_{n_{i}} \circ c_{j,b_{j}z} \circ \lambda_{g'_{j}} \right) \right\vert \, d\mu_{i}^{\otimes (n_{i} -1)}(z) \\
	&\leq \int \tfrac{\epsilon}{2n_{i}} \, d\mu_{i}^{\otimes (n_{i}-1)}(z) \, = \,  \tfrac{\epsilon}{2n_{i}}
\end{align*} for all $j \in \{ 1,\ldots,n_{i} \}$ and $f \in H$. We conclude that, for all $f \in H$, \begin{align*}
	\bigl\vert &\mathbb{E}_{\nu_{i}}(f) - \mathbb{E}_{\nu_{i}}\bigl(f \circ \lambda_{h_{n_{i}}(g')}\bigr) \bigr\vert \\
	&\leq \, \bigl\vert \mathbb{E}_{\nu_{i}}(f) - \mathbb{E}_{\nu_{i}}\bigl(f \circ \lambda_{h_{n_{i}}(a_{1})}\bigr) \bigr\vert + \ldots + \bigl\vert \mathbb{E}_{\nu_{i}}\bigl(f \circ \lambda_{h_{n_{i}}(a_{n_{i}-1})}\bigr) - \mathbb{E}_{\nu_{i}}\bigl(f \circ \lambda_{h_{n_{i}}(g')}\bigr) \bigr\vert \\
	&\leq \, \tfrac{\epsilon}{2n_{i}} + \ldots + \tfrac{\epsilon}{2n_{i}} \, = \, \tfrac{\epsilon}{2} ,
\end{align*} and therefore \begin{align*}
	\vert \mathbb{E}_{\nu_{i}}(f) \, - \,  &\mathbb{E}_{\nu_{i}}(f \circ \lambda_{g}) \vert \\
		&\leq \, \bigl\vert \mathbb{E}_{\nu_{i}}(f) - \mathbb{E}_{\nu_{i}}\bigl(f \circ \lambda_{h_{n_{i}}(g')}\bigr) \bigr\vert + \bigl\vert \mathbb{E}_{\nu_{i}}\bigl(f \circ \lambda_{h_{n_{i}}(g')}\bigr) - \mathbb{E}_{\nu_{i}}(f \circ \lambda_{g}) \bigr\vert \\
		&\leq \, \tfrac{\epsilon}{2} + \bigl\Vert \bigl(f \circ \lambda_{h_{n_{i}}(g')}\bigr) - (f \circ \lambda_{g}) \bigr\Vert_{\infty} \\
		&= \, \tfrac{\epsilon}{2} + \bigl\Vert f - \bigl(f \circ \lambda_{gh_{n_{i}}(g')^{-1}}\bigr) \bigr\Vert_{\infty} \, \leq \, \epsilon ,
\end{align*} which proves~\eqref{claim} and hence completes the argument. \end{proof}

Now everything is prepared to prove the main result.

\begin{proof}[Proof of Theorem~\ref{theorem:main}] The implications (4)$\Longrightarrow$(3)$\Longrightarrow$(2) are trivial.
	
(1)$\Longrightarrow$(4). According to Theorem~\ref{theorem:topological.day}, for all $n \in \mathbb{N}$, $n \geq 1$, $\epsilon > 0$, $H \in \mathrm{RUEB}(G)$ and finite $E \subseteq G$, there exists $\mu \in \mathrm{Pr}_{\mathrm{fin}}(G)$ such that \begin{displaymath}
	\forall g \in E \colon \quad \sup\nolimits_{f \in H} \vert \mathbb{E}_{\mu}(f) - \mathbb{E}_{\mu}(f \circ \lambda_{g}) \vert \leq \tfrac{\epsilon}{n} .
\end{displaymath} Thus, we can choose a net $(n_{i},\mu_{i})_{i \in I}$ in $(\mathbb{N}\setminus \{ 0 \}) \times \mathrm{Pr}_{\mathrm{fin}}(G)$ so that $n_{i} \to \infty$ and \begin{displaymath}
	\forall H \in \mathrm{RUEB}(G) \, \forall g \in G \colon \quad  \sup\nolimits_{f \in H} \vert \mathbb{E}_{\mu_{i}}(f) - \mathbb{E}_{\mu_{i}}(f \circ \lambda_{g}) \vert \cdot n_{i} \to 0 .
\end{displaymath} For each $i \in I$, let us consider $\nu_{i} \defeq ( h_{n_{i}})_{\ast}\left(\mu_{i}^{\otimes n_{i}}\right) \in \mathrm{Pr}_{\mathrm{fin}}(L^{0}(G))$. By Lemma~\ref{lemma:convergence.to.invariance}, the net $(\nu_{i})_{i \in I}$ converges to invariance in the UEB topology.

We will apply Theorem~\ref{theorem:whirly.groups} to the net $(\nu_{i})_{i \in I}$. So, we need to argue that $(\nu_{i})_{i \in I}$ concentrates in $L^{0}(G)$. For each $i \in I$, denote by $S_{i}$ the (finite) support of $\mu_{i}$ and consider the discrete probability space $(S_{i},\mu_{i})$. By Example~\ref{example:concentration}, the net \begin{displaymath}
	(\mathscr{Y}_{S_{i},\nu_{i},n_{i}})_{i \in I} = \left(S_{i}^{n_{i}},\mu_{i}^{\otimes n_{i}},d_{n_{i}}\right)_{i \in I}
\end{displaymath} concentrates. In view of Remark~\ref{remark:concentration2}, it therefore suffices to show that the family $(h_{n_{i}})_{i \in I}$ is uniformly equicontinuous. For this purpose, let $U \in \mathscr{U}_{o}(G)$ and $\epsilon > 0$. For all $i \in I$ and $x,y \in S_{i}^{n_{i}}$, we have \begin{align*}
	\lambda \left(\left\{ t \in [0,1] \left\vert h_{n_{i}}(x)(t) h_{n_{i}}(y)(t)^{-1} \notin U \right\}\right) \right. &\leq \, \lambda (\{ t \in [0,1] \mid h_{n_{i}}(x)(t) \ne h_{n_{i}}(y)(t) \}) \\
		&= \, \frac{\vert \{ j \in \{ 1,\ldots,n_{i} \} \mid x_{j} \ne y_{j} \} \vert}{n_{i}} \\
		&= \, d_{n_{i}}(x,y) ,
\end{align*} and thus \begin{displaymath}
	d_{n_{i}}(x,y) < \epsilon \quad \Longrightarrow \quad h_{n_{i}}(x)h_{n_{i}}(x)^{-1} \in N(U,\epsilon) .
\end{displaymath} Hence, $(\nu_{i})_{i \in I}$ concentrates in $L^{0}(G)$ by Remark~\ref{remark:concentration2}, and so $L^{0}(G)$ is whirly amenable by Theorem~\ref{theorem:whirly.groups}.

(2)$\Longrightarrow$(1). Consider the operator $\Phi \colon \mathrm{RUC}_{b}(G) \to \mathrm{RUC}_{b}(L^{0}(G))$ given by \begin{displaymath}
	\Phi (f)(h) \defeq \int_{[0,1]} f(h(x)) \, d\lambda (x) \qquad (f \in \mathrm{RUC}_{b}(G), \, h \in L^{0}(G)) .
\end{displaymath} We first check that $\Phi$ is well defined. Let $f \in \mathrm{RUC}_{b}(G)$. For every $h \in L^{0}(G)$, the function $f \circ h \colon [0,1] \to \mathbb{R}$ is bounded and $\mu$-measurable, whence the expression $\Phi (f)(h)$ is well defined. Moreover, \begin{displaymath}
	\sup_{h \in L^{0}(G)} \vert \Phi (f)(h) \vert = \Vert f \Vert_{\infty}
\end{displaymath} and so $\Phi (f) \in \ell^{\infty}(L^{0}(G))$. To see that $\Phi (f)$ belongs to $\mathrm{RUC}_{b}(L^{0}(G))$, let $\epsilon > 0$. Since $f \in \mathrm{RUC}_{b}(G)$, there exists $U \in \mathscr{U}_{o}(G)$ such that $\Vert f - (f \circ \lambda_{g}) \Vert_{\infty} \leq \epsilon/2$ for all $g \in U$. Consider \begin{displaymath}
	\epsilon' \defeq \frac{\epsilon}{2\Vert f \Vert_{\infty} + 1} .
\end{displaymath} Then $V \defeq N(U,\epsilon')$ is an identity neighborhood in $L^{0}(G)$. Now, if $h_{0},h_{1} \in L^{0}(G)$ and $h_{0}h_{1}^{-1} \in V$, then $\mu (A) \leq \epsilon'$ for $A \defeq \{ x \in [0,1] \mid h_{0}(x)h_{1}(x)^{-1} \notin U \}$, and so \begin{align*}
	\vert \Phi (f)&(h_{0}) - \Phi(f)(h_{1}) \vert \\
	&\leq \, \int_{A} \vert f(h_{0}(x)) - f(h_{1}(x)) \vert \, d\lambda (x) + \int_{[0,1]\setminus A} \vert f(h_{0}(x)) - f(h_{1}(x)) \vert \, d\lambda (x) \\
	&\leq \, 2\Vert f \Vert_{\infty} \epsilon' + \frac{\epsilon}{2} \, \leq \, \epsilon .
\end{align*} This shows that $\Phi (f) \in \mathrm{RUC}_{b}(L^{0}(G))$. So, $\Phi$ is a well defined mapping. It is straightforward to check that $\Phi$ is linear, $\Phi (\mathbf{1}_{G}) = \mathbf{1}_{L^{0}(G)}$, and $\Phi (f_{0}) \leq \Phi (f_{1})$ for all $f_{0},f_{1} \in \mathrm{RUC}_{b}(G)$ with $f_{0} \leq f_{1}$. Also, if $f \in \mathrm{RUC}_{b}(G)$ and $g \in G$, then \begin{align*}
	\Phi (f \circ \lambda_{g}) (h) &= \int_{[0,1]} f(gh(x)) \, d\lambda (x) = \int_{[0,1]} f(h_{1}(g)(x)h(x)) \, d\lambda (x) \\
	&= \Phi (f) (h_{1}(g)h) = (\Phi (f) \circ \lambda_{h_{1}(g)})(h)
\end{align*} for all $h \in L^{0}(G)$, that is, $\Phi (f \circ \lambda_{g}) = \Phi (f) \circ \lambda_{h_{1}(g)}$. Now, assuming that $L^{0}(G)$ is amenable and considering a left-invariant mean $\mu$ on $\mathrm{RUC}_{b}(L^{0}(G))$, we deduce from the properties above that $\mu \circ \Phi \colon \mathrm{RUC}_{b}(G) \to \mathbb{R}$ is a left-invariant mean, whence $G$ is amenable. This completes the proof. \end{proof}

We conclude this note with a question about the concentration of invariant means.

\begin{question} Let $(G_n)_{n \in \mathbb{N}}$ be a sequence of amenable topological groups. Then there are invariant means, $\phi_n$, on each product $\prod_{i=1}^n G_i$, that is, invariant regular Borel probability measures on the Samuel compactifications (greatest ambits) $\mathrm{S}\left(\prod_{i=1}^n G_i\right)$. These measures are not, in general, the product measures. Assume for simplicity that $(G_i)_{i \in \mathbb{N}}$ are Polish, and equip the products $\prod_{i=1}^n G_i$ with the (right-invariant) normalized Hamming ($\ell^1$-type sum) distance. Do the means $(\phi_n)_{n \in \mathbb{N}}$ concentrate, in the sense that for every sequence of bounded 1-Lipschitz functions $f_n$ on $\prod_{i=1}^n G_i$, one has \begin{displaymath}
	\phi_{n}\left((f_{n}-\phi_{n}(f_{n}))^2\right)\to 0 \ ?
\end{displaymath} It looks likely that this can be proved using the results of~\cite{SchneiderThom} in combination with the martingale technique as in~\cite{MilmanSchechtman}. \end{question}

\section*{Acknowledgments}

The first-named author was partially supported by the CNPq (Brazil) through a Visiting Researcher Fellowship (processo 310012-2016) and by the Canadian NSERC through a 2012--2017 Disovery grant.
The second-named author acknowledges funding of the German Research Foundation (reference no.~SCHN 1431/3-1) as well as the Excellence Initiative by the German Federal and State Governments.

%%%%%%%%%%%%%%%%%%%%%%%%%%%%%%%%%%

\end{document}